\documentclass[reqno,11pt]{amsproc}

\usepackage{graphicx}
\usepackage{tikz}
\usepackage{tikz-cd}
\usepackage[margin=1.5in]{geometry}
\usepackage{amssymb}
\usepackage{hyperref}
\usepackage{microtype}
\usepackage{enumitem}
\usepackage[nocompress]{cite}
\usepackage{bbm}
\usepackage[skip=1ex]{caption}
\usepackage{subcaption}
\usepackage{mathtools}
\usepackage{xpatch}
\usepackage{url}

\newtheorem{theorem}{\bf Theorem}[section]

\newtheorem{lemma}[theorem]{\bf Lemma}
\newtheorem{corollary}[theorem]{\bf Corollary}

\theoremstyle{definition}

\newtheorem*{definition*}{\bf Definition}

\renewcommand{\descriptionlabel}[1]%
     {\hspace{\labelsep}\textsf{#1}}

\makeatletter
\newcommand{\proofpart}[2]{%
  \addvspace{\medskipamount}%
  \noindent{\em Step #1: #2} \par \nobreak
  \addvspace{\smallskipamount}%
  \@afterheading
}
\makeatother

\newcommand		{\p}[1]	{\left(#1\right)}
\newcommand		{\pp}[1]{\left[#1\right]}
\newcommand		{\abs}[1]{\left|#1\right|}

\newcommand     	{\f}  {\mathbb F}
\newcommand     	{\q}  {\mathbb Q}

\renewcommand     	{\c}  {\mathbb C}
\newcommand     	{\gl} {\mathrm{GL}}
\renewcommand     	{\sl} {\mathrm{SL}}
\newcommand     	{\so} {\mathrm{SO}}
\newcommand     	{\su} {\mathrm{SU}}
\renewcommand     	{\sp} {\mathrm{Sp}}
\newcommand     	{\cl} {\overline}
\renewcommand    	{\ker} {\mathrm{Ker}\,}
\renewcommand    	{\dim} {\mathrm{dim}}

\renewcommand    	{\mod} {\mathrm{mod}}
\newcommand    		{\additive} {\mathbb{G}_a}

\DeclareTextFontCommand{\emph}{\bfseries\em}


\begin{document}

\title[Algebraic Groups over Finite Fields: Subgroups and Isogenies]{
	\large Algebraic Groups over Finite Fields: \\
		Connections Between Subgroups and Isogenies}

\author[Davide Sclosa]{Davide Sclosa$^1$}

\date{\today}

\footnotetext[1]{Mathematics Department, Vrije Universiteit Amsterdam;
e-mail:\hfill{\mbox{}}\\\hbox{d.sclosa@vu.nl}}

\keywords{algebraic group, isogeny, rational points, Frobenius, finite index.}

\begin{abstract}
Let~$G$ be a linear algebraic group defined over a finite field~$\f_q$.
We present several connections between the isogenies of~$G$
and the finite groups of rational points~$(G(\f_{q^n}))_{n\geq 1}$.
We show that an isogeny~$\phi: G'\to G$ over~$\f_q$
gives rise to a subgroup of fixed index in~$G(\f_{q^n})$ for infinitely many~$n$.
Conversely, we show that if~$G$ is reductive the existence of a subgroup~$H_n$
of fixed index~$k$ for infinitely many~$n$ implies the existence of an isogeny of order~$k$.
In particular, we show that the infinite sequence~$H_n$ is covered by a finite number of isogenies.
This result applies to classical groups~$\gl_m$, $\sl_m$, $\so_m$, $\su_m$, $\sp_{2m}$
and can be extended to non-reductive groups if~$k$ is prime to the characteristic.
As a special case, we see that if~$G$ is simply connected
the minimal indices of proper subgroups of~$G(\f_{q^n})$
diverge to infinity.
Similar results are investigated regarding the sequence~$(G(\f_p))_p$
by varying the characteristic~$p$.
\end{abstract}

\maketitle


\section{Introduction.}

Linear algebraic groups are groups of matrices defined by polynomial equations.
We adopt the classical notion of algebraic group as a group
of rational points over the algebraic closure, following the language of
A.~Borel, J.~Tits and C.~Chevalley~\cite{borel2001, chevalley1954algebraic, chevalley1947algebraic}.
Throughout the article we clarify
where the modern scheme-theoretic
approach~\cite{milne2017algebraic, RG, waterhouse1979introduction} differs.
We focus on linear algebraic groups over finite fields.
They are are closely related to the classification of finite simple groups~\cite{gorenstein2013}.

Before presenting the main results, we introduce some notation.
Let~$\f_q$ be a finite field with~$q$ elements and characteristic~$p$.
We denote by $\cl{\f_q}$ an algebraic closure. For every~$n\geq 1$
we consider the finite extension~$\f_{q^n} \subseteq \cl{\f_q}$.
A \emph{linear algebraic group}~$G$ defined over~$\f_q$ is
a closed subgroup of~$\gl_m(\cl{\f_q})$ defined by
polynomial equations with coefficients in~$\f_q$.
We always assume that~$G$ is connected.
The \emph{group of} $\f_{q^n}$-\emph{rational points}~$G(\f_{q^n})$
is the subgroup of~$G$ whose elements have entries in~$\f_{q^n}$.
The Frobenius automorphism~$x\mapsto x^{q^n}$ of~$\f_{q^n}$
extends naturally to a group automorphism~$g\mapsto \sigma_{q^n}(g)$ of~$G$
via the action on the matrix entries.
The group of rational points~$G(\f_{q^n})$ is exactly the fixed subgroup of~$\sigma_{q^n}$.
An \emph{isogeny} between connected linear algebraic groups
is a surjective homomorphism~$\phi:G'\to G$ with finite kernel.
We call~\emph{order} of~$\phi$ the cardinality of the kernel~$\abs{\ker\phi}$.

In this paper we present several connections between
the sequence of finite groups~$(G(\f_{q^n}))_{n\geq 1}$ and
the isogenies~$\phi:G'\to G$.
In Theorem~\ref{thm:from_isogenies_to_subgroups} we
show that if~$\phi$ has order~$k$, then for infinitely many~$n$
the group~$G(\f_{q^n})$ contains a subgroup of index~$k$.
More surprisingly, in Theorem~\ref{thm:main} and Theorem~\ref{thm:main_general} we show that,
under suitable hypotheses,
if for infinitely many~$n$ the group~$G(\f_{q^n})$
contains a subgroup of index~$k$,
then there exist a group~$G'$ and an isogeny~$\phi: G'\to G$ of order~$k$.
In particular, we show that finitely many isogenies are responsible for the infinite
sequence of subgroups of index~$k$.

These results constrain the asymptotic behavior of subgroups:
the set of positive integers~$n$ for which~$G(\f_{q^n})$ contains a subgroup of index~$k$
is either finite or contains
an arithmetic progression (Corollary~\ref{cor:arithmetic_progression}).

As a corollary we obtain:
if~$G$ is semi\-simple, simply connected
and~$k>1$, then for every $n$~large enough 
the group~$G(\f_{q^n})$ contains no subgroup of index~$k$
(Corollary~\ref{cor:semisimple_simply_connected}).
Notice that, while our result is purely asymptotic,
the maximal subgroups of simple groups of Lie type
have actually been classified~\cite{kleidman1988survey}.

In the last section, instead of~$(G(\f_{q^n}))_{n\geq 1}$,
we consider the sequence~$(G(\f_p))_p$ by varying the characteristic.
In analogy to Corollary~\ref{cor:semisimple_simply_connected}
we show that if~$G$ is semi\-simple, simply connected
and~$k>1$, then for every prime~$p$ large enough
the group~$G(\f_p)$ contains no subgroup of index~$k$ (Theorem~\ref{thm:characteristic}).


\section*{Acknowledgement.}
This work is part of a {\it Tesi di Laurea} at the University of Udine.
The author is grateful to his advisor, prof. Pietro Corvaja,
for many inspiring discussions, support and patience.
The author would also like to thank prof. Gunter Malle for their time and precious comments
that helped improve the manuscript.


\section{From isogenies to subgroups.}
In this section we show how one rational isogeny gives rise to an infinite family
of subgroups of fixed index.

Let $G'$~and~$G$ be connected linear algebraic groups defined over~$\f_q$.
Let~$\phi: G'\to G$ be an isogeny defined over~$\f_q$.
For every~$n$ the isogeny~$\phi$ restricts to a homomorphism
of finite groups~$\phi: G'(\f_{q^n})\to G(\f_{q^n})$. Notice the abuse of notation.

The kernel of~$\phi: G'(\f_{q^n})\to G(\f_{q^n})$
coincides with the group of~$\f_{q^n}$-rational points of
the kernel of~$\phi: G'\to G$.
The notation~$\ker\phi(\f_{q^n})$ is unambiguous.

The same cannot be said of the image, since
the group~$\phi\p{G'(\f_{q^n})}$ and the group~$\phi(G')(\f_{q^n})=G(\f_{q^n})$
may be different. Indeed, although~$\phi$ is surjective at the level of algebraic closure,
it may not be surjective at the level of rational points:

\begin{lemma} \label{lem:image}
The image~$\phi\p{G'(\f_{q^n})}$ has index~$\abs{\ker\phi(\f_{q^n})}$ in~$G(\f_{q^n})$.
\end{lemma}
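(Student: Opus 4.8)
The plan is to identify the cokernel of $\phi\colon G'(\f_{q^n})\to G(\f_{q^n})$ by a direct surjectivity argument based on Lang's theorem, which here plays the role of the vanishing of $H^1$ for connected groups. Write $F=\sigma_{q^n}$ and $N=\ker\phi$. First I would record two preliminary facts: since $\phi$ is defined over~$\f_q$, the finite group~$N$ is stable under~$F$; and since~$N$ is a finite normal subgroup of the connected group~$G'$, it is central (the conjugation action $G'\to\mathrm{Aut}(N)$ is a morphism from a connected variety to a finite group, hence trivial), in particular $N$ is abelian.

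Next I would construct a connecting homomorphism. Given $x\in G(\f_{q^n})$, choose $g\in G'$ with $\phi(g)=x$, which is possible because $\phi$ is surjective over~$\cl{\f_q}$; then $\phi\p{F(g)g^{-1}}=F(x)x^{-1}=1$, so $F(g)g^{-1}\in N$. Set $B=\{\,F(n)n^{-1}:n\in N\,\}$. Using that $N$ is abelian and $F$-stable, one checks that the map $n\mapsto F(n)n^{-1}$ is a homomorphism of~$N$ with image~$B$ and kernel $N^F=\ker\phi(\f_{q^n})$, so $B$ is a subgroup with $\abs{B}=\abs{N}/\abs{\ker\phi(\f_{q^n})}$. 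Replacing~$g$ by~$ng$ with $n\in N$ multiplies $F(g)g^{-1}$ by $F(n)n^{-1}\in B$ (this is where centrality of~$N$ enters), so $c(x):=F(g)g^{-1}\bmod B$ is a well-defined element of~$N/B$, and the same computation shows that $c\colon G(\f_{q^n})\to N/B$ is a group homomorphism.

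Then I would compute the kernel and image of~$c$. For the kernel: $c(x)=1$ means $F(g)g^{-1}=F(n)n^{-1}$ for some $n\in N$, which rearranges to $F(n^{-1}g)=n^{-1}g$; since $\phi(n^{-1}g)=\phi(g)=x$, this gives $x\in\phi\p{G'(\f_{q^n})}$, and the reverse inclusion is immediate, so $\ker c=\phi\p{G'(\f_{q^n})}$. For surjectivity: given $n\in N$, Lang's theorem applied to the connected group~$G'$ produces $g\in G'$ with $F(g)g^{-1}=n$; then $\phi(g)\in G(\f_{q^n})$ because $F(\phi(g))=\phi(F(g))=\phi(ng)=\phi(g)$, and $c(\phi(g))=n\bmod B$. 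Hence $c$ induces an isomorphism $G(\f_{q^n})/\phi\p{G'(\f_{q^n})}\cong N/B$, and the index equals $\abs{N/B}=\abs{N}/\abs{B}=\abs{\ker\phi(\f_{q^n})}$, as claimed.

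The one external input is Lang's theorem (surjectivity of $g\mapsto g^{-1}F(g)$, equivalently of $g\mapsto F(g)g^{-1}$, on a connected linear algebraic group over~$\cl{\f_q}$); this is the only place connectedness of~$G'$ is used, and it is the crux of the argument, everything else being bookkeeping with the central subgroup~$N$. A reader who prefers Galois cohomology can instead extract the statement from the exact sequence $1\to N(\f_{q^n})\to G'(\f_{q^n})\to G(\f_{q^n})\to H^1(\f_{q^n},N)\to H^1(\f_{q^n},G')$ together with $H^1(\f_{q^n},G')=1$ and $\abs{H^1(\f_{q^n},N)}=\abs{N(\f_{q^n})}$ for a finite $F$-module~$N$; the argument above is exactly this computation made explicit.
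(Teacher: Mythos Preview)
Your argument is correct, but it is a different route from the paper's. The paper proves Lemma~\ref{lem:image} in two lines by quoting the fact that isogenous connected groups over~$\f_{q^n}$ have the same number of rational points \cite[Proposition~16.8]{borel}: from $|G'(\f_{q^n})|=|G(\f_{q^n})|$ and the first isomorphism theorem for the finite-group map $\phi\colon G'(\f_{q^n})\to G(\f_{q^n})$ one reads off $[G(\f_{q^n}):\phi(G'(\f_{q^n}))]=|\ker\phi(\f_{q^n})|$ immediately. By contrast, you construct an explicit connecting homomorphism $c\colon G(\f_{q^n})\to N/B$ via Lang's theorem and identify the cokernel with $\ker\phi/\lambda_{q^n}(\ker\phi)$, then count. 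What you have actually written is (up to the cosmetic choice of $F(g)g^{-1}$ versus $g^{-1}F(g)$) the paper's proof of the later and stronger Lemma~\ref{lem:iso}, from which Lemma~\ref{lem:image} follows by the cardinality computation $|N/B|=|N^F|$ that you carry out. So your approach costs more work here but yields more: it gives the normality of $\phi(G'(\f_{q^n}))$ and the explicit description of the quotient that the paper only establishes in the next section, whereas the paper's proof of Lemma~\ref{lem:image} buys brevity by outsourcing the Lang-theoretic content to a citation.
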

\begin{proof}
Since $G'$~and~$G$ are isogenous over~$\f_{q}$, in particular they are isogenous over~$\f_{q^n}$.
Two groups isogenous over~$\f_{q^n}$
have the same number of~$\f_{q^n}$-rational points~\cite[Proposition 16.8]{borel}.
Therefore, the cardinality of the kernel is equal the index of the image.
\end{proof}

In particular, the quotient between two groups of rational points may be different
from the group of rational points of the quotient. This is caused by the discreteness of
the kernel. Indeed, if $N$ is a connected normal subgroup of~$G$ then~$(G/N)(\f_{q^n})$
is equal to~$G(\f_{q^n})/N(\f_{q^n})$, see~\cite[Corollary 16.5 (ii)]{borel}.

From Lemma~\ref{lem:image} we easily obtain the following theorem:

\begin{theorem} \label{thm:from_isogenies_to_subgroups}
Let $G'$~and~$G$ be two connected linear algebraic groups defined over~$\f_q$.
Let~$\phi: G'\to G$ be an isogeny of order~$k$ defined over~$\f_q$.
Then, for infinitely many~$n$, the group of rational points~$G(\f_{q^n})$
has a subgroup of index~$k$.
The set of integers for which it happens contains an arithmetic progression.
\end{theorem}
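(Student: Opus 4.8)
The plan is to combine Lemma~\ref{lem:image} with the fact that the order of the finite kernel group $\abs{\ker\phi(\f_{q^n})}$ equals $k$ for infinitely many $n$ — in fact for all $n$ in an arithmetic progression. Since $\ker\phi$ is a finite algebraic group scheme defined over $\f_q$, its group of $\cl{\f_q}$-rational points $(\ker\phi)(\cl{\f_q})$ is a finite set of order at most $k$, and the Frobenius $\sigma_q$ permutes these points. (When the characteristic divides $k$ the group $\ker\phi$ may be non-reduced, so $\abs{(\ker\phi)(\cl{\f_q})} < k$; I will address this below.) Pick $N$ to be the least common multiple of the orders of the $\sigma_q$-orbits on $(\ker\phi)(\cl{\f_q})$; then every point of $(\ker\phi)(\cl{\f_q})$ is fixed by $\sigma_{q^N} = \sigma_q^N$, and hence by $\sigma_{q^{mN}}$ for every $m\geq 1$. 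Therefore $\ker\phi(\f_{q^{mN}}) = (\ker\phi)(\cl{\f_q})$ for all $m\geq 1$, so this cardinality is constant along the arithmetic progression $n = N, 2N, 3N, \dots$

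First I would make the reduced-kernel case precise: assuming $\ker\phi$ is reduced, $\abs{(\ker\phi)(\cl{\f_q})} = \abs{\ker\phi} = k$, so the argument above gives $\abs{\ker\phi(\f_{q^{mN}})} = k$ for all $m\geq 1$. Applying Lemma~\ref{lem:image} with $q^{mN}$ in place of $q^n$, the image $\phi(G'(\f_{q^{mN}}))$ is a subgroup of $G(\f_{q^{mN}})$ of index exactly $k$. This produces the desired subgroup for every $n$ in the arithmetic progression $\{mN : m\geq 1\}$, which in particular is infinite. To handle the general (possibly non-reduced) kernel, I would instead invoke Lemma~\ref{lem:image} directly: it already asserts the image has index $\abs{\ker\phi(\f_{q^n})}$, and the same orbit-counting argument shows $\abs{\ker\phi(\f_{q^n})}$ stabilizes to its maximum value along an arithmetic progression — but that maximum is $\abs{(\ker\phi)(\cl{\f_q})}$, which need not be $k$. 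This is the one genuine subtlety, and I suspect the cleanest fix in the classical framework is to note that the paper's convention (Borel's language) treats $\ker\phi$ as the reduced variety of $\cl{\f_q}$-points, so that $\abs{\ker\phi} = \abs{(\ker\phi)(\cl{\f_q})}$ by definition, making the issue vanish; the scheme-theoretic caveat can then be flagged in a remark.

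The main obstacle, then, is not the structure of the argument but this bookkeeping about what "$\abs{\ker\phi}$" means in characteristic $p$ when $p \mid k$. Modulo that, the proof is short: (i) $\sigma_q$ acts on the finite set of kernel points with orbits of bounded size; (ii) take $N$ the lcm of the orbit sizes; (iii) for $n \in N\z_{\geq 1}$ every kernel point is $\f_{q^n}$-rational, so $\abs{\ker\phi(\f_{q^n})} = k$; (iv) Lemma~\ref{lem:image} converts this into a subgroup of $G(\f_{q^n})$ of index $k$; (v) the set $N\z_{\geq 1}$ is an arithmetic progression, proving both halves of the statement simultaneously. I would also remark that one can often take $N = 1$ when $\ker\phi$ is already $\f_q$-rational pointwise, e.g. for the Frobenius isogeny itself, but this is not needed for the statement.
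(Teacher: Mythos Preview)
Your argument is correct and is essentially the paper's own proof: the paper simply observes that since $\ker\phi$ is finite there is some $m$ with $\ker\phi = \ker\phi(\f_{q^m})$, hence $\ker\phi = \ker\phi(\f_{q^n})$ for every multiple $n$ of $m$, and then invokes Lemma~\ref{lem:image}. Your orbit-of-Frobenius description just makes this $m$ explicit, and your worry about non-reduced kernels is unnecessary here because, as you yourself note, the paper works in Borel's classical framework where $\ker\phi$ is by definition its set of $\cl{\f_q}$-points.
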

\begin{proof}
Since~$\ker\phi$ is finite, then~$\ker\phi = \ker\phi(\f_{q^m})$ for some~$m$.
It follows that~$\ker\phi = \ker\phi(\f_{q^n})$ for every~$n$
multiple of~$m$.
By Lemma~\ref{lem:image} the group~$G(\f_{q^n})$ has a subgroup of index~$k$
for every~$n$ multiple of~$m$.
\end{proof}


\section{From subgroups to isogenies: reductive groups.}
In this section we show how the existence of infinitely may subgroups of fixed index~$k$
imply the existence of an isogeny of order~$k$, in the case of reductive groups.

Let~$\phi: G'\to G$ be an isogeny defined over~$\f_q$.
Since~$G'$ is connected
and~$\ker\phi$ is discrete, the action of~$G$ on~$\ker\phi$ by conjugation is trivial.
Therefore, the kernel~$\ker\phi$ is a central subgroup of~$G'$.
Notice, however, that not all isogenies
are central in the scheme-theoretic sense.

We already observed that although
\[
	1\to \ker\phi \to G'\to G \to 1
\]
is exact, the sequence
\[
	1 \to \ker\phi(\f_{q^n}) \to G'(\f_{q^n}) \to G(\f_{q^n}) \to 1
\]
may not be exact. The cokernel~$G(\f_{q^n})/\phi(G'(\f_{q^n}))$
measures how far this sequence is from being exact.
Out next goal is understanding the cokernel.

To this end, we recall a standard tool in the study of algebraic groups over finite fields.
Fix~$n$. The \emph{Lang map}~$\lambda_{q^n}$ is defined by
\[
	\lambda_{q^n} \colon G'\to G', \quad y \mapsto y^{-1}\sigma_{q^n}(y).
\]
Lang's Theorem tells us that this map is surjective~\cite[16.4]{borel}.

Notice that, since~$\phi$ commutes with~$\sigma_{q^n}$,
then~$\lambda_{q^n} (\ker\phi)\subseteq \ker\phi$. Since~$\ker\phi$
is central, in particular commutative,
it follows that~$\lambda_{q^n} (\ker\phi)$ is a normal subgroup of~$\ker\phi$.
Therefore the group quotient~$\ker\phi/{\lambda_{q^n}(\ker\phi)}$ is well defined.
It turns out that it is isomorphic to the cokernel:

\begin{lemma} \label{lem:iso}
The following group isomorphism holds:
$$ \frac{G(\f_{q^n})}{\phi(G'(\f_{q^n}))} \cong \frac{\ker\phi}{\lambda_{q^n}(\ker\phi)}.$$
In particular~$\phi(G'(\f_{q^n}))$ is a normal subgroup of~$G(\f_{q^n})$.
\end{lemma}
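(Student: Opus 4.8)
The plan is to build an explicit surjective homomorphism
\[
	\psi \colon G(\f_{q^n}) \longrightarrow \frac{\ker\phi}{\lambda_{q^n}(\ker\phi)}
\]
whose kernel is precisely~$\phi(G'(\f_{q^n}))$, and then invoke the first isomorphism theorem. To define~$\psi$, take~$g\in G(\f_{q^n})$ and choose any~$y\in G'$ with~$\phi(y)=g$; this is possible because~$\phi$ is surjective over~$\cl{\f_q}$. Since~$\phi\circ\sigma_{q^n}=\sigma_{q^n}\circ\phi$ and~$g$ is fixed by~$\sigma_{q^n}$, we get $\phi\p{\lambda_{q^n}(y)}=\phi(y)^{-1}\sigma_{q^n}(\phi(y))=g^{-1}g=1$, so $\lambda_{q^n}(y)\in\ker\phi$. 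Set~$\psi(g)$ to be the class of~$\lambda_{q^n}(y)$ modulo~$\lambda_{q^n}(\ker\phi)$.

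The next step is to verify the properties of~$\psi$, all of which rest on the centrality of~$\ker\phi$ (established just above the statement). First, well-definedness: if~$y'=yc$ is another lift, with~$c\in\ker\phi$, then a short computation using that~$c$ is central gives $\lambda_{q^n}(y')=\lambda_{q^n}(y)\,\lambda_{q^n}(c)$, so the class modulo~$\lambda_{q^n}(\ker\phi)$ does not change. Second, the homomorphism property: for lifts~$y_1,y_2$ of~$g_1,g_2$, centrality of~$\lambda_{q^n}(y_1)\in\ker\phi$ yields $\lambda_{q^n}(y_1y_2)=\lambda_{q^n}(y_1)\,\lambda_{q^n}(y_2)$, hence~$\psi(g_1g_2)=\psi(g_1)\psi(g_2)$.

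For surjectivity I would use Lang's theorem applied to~$G'$: given~$c\in\ker\phi$, there is~$y\in G'$ with~$\lambda_{q^n}(y)=c$; then~$\phi(\lambda_{q^n}(y))=1$ shows~$\phi(y)\in G(\f_{q^n})$, and by construction~$\psi(\phi(y))$ is the class of~$c$. For the kernel, note that~$\psi(g)=1$ means, for one (equivalently every) lift~$y$, that~$\lambda_{q^n}(y)=\lambda_{q^n}(c)$ for some~$c\in\ker\phi$; a centrality computation then shows~$yc^{-1}$ is~$\sigma_{q^n}$-fixed, i.e.~$yc^{-1}\in G'(\f_{q^n})$, while~$\phi(yc^{-1})=g$, so~$g\in\phi(G'(\f_{q^n}))$. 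Conversely, if~$g=\phi(y')$ with~$y'\in G'(\f_{q^n})$ then~$\lambda_{q^n}(y')=1$ and hence~$\psi(g)=1$. Thus~$\ker\psi=\phi(G'(\f_{q^n}))$; being a kernel, it is normal in~$G(\f_{q^n})$, and the first isomorphism theorem gives the stated isomorphism.

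I do not expect a genuine obstacle here: the only delicate point is that every verification (independence of the lift, multiplicativity, and the kernel computation) would break down for a non-central kernel, so the argument must consistently exploit the centrality of~$\ker\phi$ together with Lang's surjectivity for~$G'$. Once those two facts are in hand, the proof is essentially bookkeeping with the Lang map.
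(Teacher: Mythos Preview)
Your proposal is correct and follows essentially the same route as the paper: define the map $G(\f_{q^n})\to\ker\phi/\lambda_{q^n}(\ker\phi)$ by lifting along~$\phi$ and applying the Lang map, then verify well-definedness and multiplicativity via centrality of~$\ker\phi$, surjectivity via Lang's theorem on~$G'$, and identify the kernel as~$\phi(G'(\f_{q^n}))$. The only cosmetic difference is that you make the appeal to the first isomorphism theorem and the reverse kernel inclusion explicit, whereas the paper leaves these implicit.
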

\begin{proof}
Take~$x\in G(\f_{q^n})$. Since~$\phi:G'\to G$ is surjective, then there is~$y\in G'$
such that~$\phi(y)=x$. Since~$\phi$ commutes with~$\sigma_{q^n}$ and~$\sigma_{q^n}(x)=x$
we have~$y^{-1}\sigma_{q^n}(y) \in \ker\phi$. Consider the map
\begin{equation*}
	\mu_{q^n} \colon G(\f_{q^n}) \to \frac{\ker\phi}{\lambda_{q^n}(\ker\phi)},
	\quad x\mapsto \lambda_{q^n} (y) .
\end{equation*}
First of all, we need to check that~$\mu_{q^n}$ is well defined.
If a different~$y$ is chosen, say~$z\in G'$ such that~$\phi(z)=x$,
then~$yz^{-1} \in \ker\phi$ and
so~$(yz^{-1})^{-1} \sigma_{q^n}(yz^{-1}) \in \lambda_{q^n}(\ker\phi)$,
which is equivalent
to~$(z^{-1}\sigma_{q^n}(z))^{-1} y^{-1}\sigma_{q^n}(y) \in \lambda_{q^n}(\ker\phi)$
since~$\lambda_{q^n}(\ker\phi)$ is central.

Now we prove that~$\mu_{q^n}$ is surjective. Let~$a\in \ker\phi$.
By Lang's Theorem
there is~$y\in G'$ such that~$y^{-1}\sigma_{q^n}(y) = a$.
Let~$x = \phi(y)$. Since~$\phi(y^{-1}\sigma_{q^n}(y)) = 1$ we have~$\sigma_{q^n}(x)=x$
and so~$x\in G(\f_{q^n})$. By definition~$x$ is mapped to~$y^{-1}\sigma_{q^n}(y)$,
which is equal to~$a$.

Next, we prove that~$\mu_{q^n}$ is a homomorphism.
Take any~$x,w\in G(\f_{q^n})$, let~$\phi(y)=x$ and~$\phi(z)=w$.
We need to show that~$(yz)^{-1}\sigma_{q^n}(yz)$ is equal
to~$y^{-1}\sigma_{q^n}(y) z^{-1}\sigma_{q^n}(z)$. This is the same
as~$z^{-1} y^{-1}\sigma_{q^n}(y) = y^{-1} \sigma_{q^n}(y) z^{-1}$, which holds
since~$y^{-1}\sigma_{q^n}(y)\in \ker\phi$ is central.

Finally, we prove that the kernel of~$\mu_{q^n}$ is~$\phi(G'(\f_{q^n}))$.
Let~$x = \phi(y)$ be an element of~$G(\f_{q^n})$ such that~$y^{-1}\sigma_{q^n}(y)$
belongs to~$\lambda_{q^n}(\ker\phi)$. Then~$y^{-1}\sigma_{q^n}(y) = a^{-1}\sigma_{q^n}(a)$ for
some~$a\in \ker\phi$. Since~$\ker\phi$ is
central and~$y^{-1}\sigma_{q^n}(y) = a^{-1}\sigma_{q^n}(a)$ it follows that
$$ \sigma_{q^n}(a^{-1} y) = \sigma_{q^n}(y) \sigma_{q^n}(a)^{-1} = a^{-1} y$$
and so~$a^{-1}y \in G'(\f_{q^n})$.
Therefore~$x = \phi(y) = \phi(a^{-1}y)$ belongs to~$\phi(G'(\f_{q^n}))$.
\end{proof}

Let~$H$ be a subgroup of~$G(\f_{q^n})$. We say that an isogeny~$\phi:G'\to G$
\emph{reaches}~$H$ if it is defined over~$\f_{q^n}$
and~$\phi(G'(\f_{q^n})) = H$.
In the following lemma, which serves as a bootstrap,
we show how to construct an isogeny reaching~$H$ from an isogeny whose image
is contained in~$H$.
The idea is simple: in light of Lemma~\ref{lem:image}, in order to make the image larger,
we need to make the quotient smaller.

\begin{lemma} \label{lem:constructing_the_isogeny}
Let $G'$,~$G$~and~$\phi:G'\to G$ be two connected linear algebraic groups
and an isogeny defined over~$\f_{q^n}$.
Let~$H$ be a subgroup of~$G(\f_{q^n})$ containing~$\phi(G'(\f_{q^n}))$ and let~$K=\mu_{q^n}(H)$. Then~$G''=G'/K$ is a connected linear algebraic group
defined over~$\f_{q^n}$; the induced isogeny~$\phi: G''\to G$ reaches~$H$.
\end{lemma}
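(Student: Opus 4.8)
The plan is to produce $G''$ as the quotient of $G'$ by the finite central subgroup $K = \mu_{q^n}(H) \subseteq \ker\phi/\lambda_{q^n}(\ker\phi)$, except that $K$ is a subgroup of a quotient, so first I would pull it back: let $\widetilde{K}$ be the preimage of $K$ under the quotient map $\ker\phi \to \ker\phi/\lambda_{q^n}(\ker\phi)$, so that $\widetilde{K}$ is a finite subgroup of $\ker\phi$ containing $\lambda_{q^n}(\ker\phi)$. Since $\ker\phi$ is central in $G'$, so is $\widetilde{K}$, hence $\widetilde{K}$ is normal in $G'$ and the quotient $G'' = G'/\widetilde{K}$ is a connected linear algebraic group. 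For it to be defined over $\f_{q^n}$ I must check that $\widetilde{K}$ is $\sigma_{q^n}$-stable: this holds because $\lambda_{q^n}(\ker\phi) \subseteq \widetilde{K}$ forces $\sigma_{q^n}(a) = a \cdot \lambda_{q^n}(a)^{-1} \cdot \lambda_{q^n}(a) \cdots$ — more cleanly, for $a \in \widetilde{K}$ we have $\sigma_{q^n}(a) = a \cdot (a^{-1}\sigma_{q^n}(a)) = a \cdot \lambda_{q^n}(a)$, and both factors lie in $\widetilde{K}$. So $G''$ is defined over $\f_{q^n}$, and the quotient map $G' \to G''$ is an isogeny defined over $\f_{q^n}$.

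Next I would factor $\phi$. Since $\widetilde{K} \subseteq \ker\phi$, the map $\phi$ descends to a homomorphism $\phi: G'' \to G$ with kernel $\ker\phi/\widetilde{K}$, which is finite, so $\phi: G'' \to G$ is again an isogeny over $\f_{q^n}$. Write $\pi: G' \to G''$ for the quotient map, so $\phi \circ \pi = \phi$ (abuse of notation as in the paper).

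The heart of the argument is to identify $\phi(G''(\f_{q^n}))$ with $H$, and here I would invoke Lemma~\ref{lem:iso} applied to the isogeny $\phi: G'' \to G$. That lemma gives
\[
\frac{G(\f_{q^n})}{\phi(G''(\f_{q^n}))} \;\cong\; \frac{\ker\phi/\widetilde{K}}{\lambda_{q^n}(\ker\phi/\widetilde{K})}.
\]
Now $\lambda_{q^n}$ on $\ker\phi/\widetilde{K}$ is induced by $\lambda_{q^n}$ on $\ker\phi$, and since $\lambda_{q^n}(\ker\phi) \subseteq \widetilde{K}$, the image $\lambda_{q^n}(\ker\phi/\widetilde{K})$ is trivial; hence the right-hand side is just $\ker\phi/\widetilde{K}$, which by construction of $\widetilde{K}$ is $(\ker\phi/\lambda_{q^n}(\ker\phi))/K$. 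Via the isomorphism $\mu_{q^n}$ of Lemma~\ref{lem:iso} for the original $\phi$, this last group is $(G(\f_{q^n})/\phi(G'(\f_{q^n})))/(H/\phi(G'(\f_{q^n}))) \cong G(\f_{q^n})/H$. So $G(\f_{q^n})/\phi(G''(\f_{q^n}))$ and $G(\f_{q^n})/H$ have the same order; since $\phi(G''(\f_{q^n})) \supseteq \phi(G'(\f_{q^n}))$ (because $G''(\f_{q^n}) \supseteq \pi(G'(\f_{q^n}))$) and $H \supseteq \phi(G'(\f_{q^n}))$, I would finish by checking the stronger fact that $\phi(G''(\f_{q^n})) \subseteq H$ — which pins down the inclusion and, combined with equality of indices, gives $\phi(G''(\f_{q^n})) = H$.

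The main obstacle I expect is precisely this last containment $\phi(G''(\f_{q^n})) \subseteq H$: an element of $G''(\f_{q^n})$ need not lift to $G'(\f_{q^n})$, only to some $y \in G'$ with $y^{-1}\sigma_{q^n}(y) \in \widetilde{K}$, and I must show $\phi(y) \in H$. By definition of $\mu_{q^n}$ the class of $\phi(y)$ in $\ker\phi/\lambda_{q^n}(\ker\phi)$ is the class of $y^{-1}\sigma_{q^n}(y)$, which lies in $\widetilde{K}/\lambda_{q^n}(\ker\phi) = K = \mu_{q^n}(H)$; since $\ker \mu_{q^n} = \phi(G'(\f_{q^n})) \subseteq H$ and $\mu_{q^n}^{-1}(K) = H$, it follows that $\phi(y) \in H$. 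Tracking these coset computations carefully, and making sure the identification of the Lang map on the quotient is correct, is the only delicate point; everything else is formal.
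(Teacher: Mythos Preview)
Your argument is correct and follows the same construction as the paper: pull $K$ back to $\widetilde K\subseteq\ker\phi$, check $\sigma_{q^n}$-stability via $\sigma_{q^n}(a)=a\cdot\lambda_{q^n}(a)$, form $G''=G'/\widetilde K$, and factor $\phi$ through it. The only difference is that you take a small detour: you re-apply Lemma~\ref{lem:iso} to the new isogeny $G''\to G$ to compute the index of $\phi(G''(\f_{q^n}))$, and then separately prove the containment $\phi(G''(\f_{q^n}))\subseteq H$. The paper instead writes down directly
\[
G''(\f_{q^n})=\{\,y\widetilde K:\ y\in G',\ \lambda_{q^n}(y)\in\widetilde K\,\},
\]
which is exactly the description you reach in your ``obstacle'' paragraph, and from there both containments with $H$ follow immediately from $\mu_{q^n}^{-1}(K)=H$; no index bookkeeping is needed. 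Your route is slightly longer but entirely sound.
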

\begin{proof}
The group~$K_{q^n}$ is defined over~$\f_{q^n}$:
since~$\lambda_{q^n} (\ker\phi) \subseteq K_{q^n} \subseteq \ker\phi$,
in particular~$\lambda_{q^n} (K_{q^n}) \subseteq K_{q^n}$ and so~$K_{q^n}$
is~$\sigma_{q^n}$-invariant.
Therefore the quotient group~$G''=G'/K_{q^n}$
is defined over~$\f_{q^n}$. Its group of~$\f_{q^n}$-rational points is
\begin{align*}
G'' (\f_{q^n}) &= \p{G'/K_{q^n}}\p{\f_{q^n}} \\
&= \{yK_{q^n} : \quad y\in G', \quad \sigma_{q^n}(yK_{q^n}) = yK_{q^n} \} \\
&= \{yK_{q^n} : \quad y\in G', \quad \lambda_{q^n}(y) \in K_{q^n} \}.
\end{align*}
Since~$K_{q^n}\subseteq \ker\phi$ the isogeny~$\phi:G'\to G$
induces a well defined isogeny~$\phi: G''\to G$.
Moreover
$$\phi(G''(\f_{q^n}))
= \{\phi(y) : \quad y\in G', \quad \lambda_{q^n}(y) \in K_{q^n} \} = H_n, $$
completing the proof.
\end{proof}

Lemma~\ref{lem:constructing_the_isogeny} helps in constructing one isogeny
reaching one subgroup. One expects that an infinite family of subgroups
require infinitely many isogenies. However:

\begin{corollary} \label{cor:finitely_many_isogenies}
Let $G'$,~$G$~and~$\phi:G'\to G$ be two connected linear algebraic groups
and an isogeny defined over~$\f_{q^n}$.
For infinitely many~$n$, let~$H_n$
be a subgroup of~$G(\f_{q^n})$ containing~$\phi(G'(\f_{q^n}))$.
Then there are finitely many isogenies reaching all~$H_n$.
\end{corollary}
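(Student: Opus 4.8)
The plan is to apply Lemma~\ref{lem:constructing_the_isogeny} once for each index~$n$ and to observe that the isogeny it produces is controlled by a single piece of data, a subgroup of~$\ker\phi$, which can take only finitely many values.

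First I would fix~$n$ and let~$K_n$ be the subgroup of~$\ker\phi$ corresponding to~$\mu_{q^n}(H_n)$ under the identification of Lemma~\ref{lem:iso}, so that~$\lambda_{q^n}(\ker\phi)\subseteq K_n\subseteq\ker\phi$. The hypothesis~$\phi(G'(\f_{q^n}))\subseteq H_n$ is precisely what is needed to invoke Lemma~\ref{lem:constructing_the_isogeny}, which then produces an isogeny~$\psi_n\colon G'/K_n\to G$ that is defined over~$\f_{q^n}$ and reaches~$H_n$.

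Next I would use the one new input: $\ker\phi$ is a finite group, hence has only finitely many subgroups, so only finitely many distinct groups occur among the~$K_n$, say~$L_1,\dots,L_r$. For each~$j$ let~$\psi_j\colon G'/L_j\to G$ be the corresponding quotient isogeny. If~$K_n=L_j$ then~$G'/K_n=G'/L_j$ as algebraic groups and the induced maps to~$G$ coincide, so~$\psi_j=\psi_n$; in particular~$\psi_j$ is defined over~$\f_{q^n}$ and reaches~$H_n$. Hence every~$H_n$ is reached by one of the finitely many isogenies~$\psi_1,\dots,\psi_r$.

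The substance here is just the triviality that a finite group has finitely many subgroups, so I expect the only real friction to be making the identification ``$\psi_j=\psi_n$'' across different~$n$ fully rigorous: one must be sure that a single algebraic-group isogeny~$\psi_j$ can be viewed as defined over every~$\f_{q^n}$ with~$K_n=L_j$, rather than as a collection of genuinely different~$\f_{q^n}$-forms. I would handle this by noting that~$\sigma_q$ permutes the finitely many subgroups of~$\ker\phi$, so there is a least~$d_j\geq 1$ with~$\sigma_{q^{d_j}}(L_j)=L_j$; then~$L_j$, and with it~$G'/L_j$ and~$\psi_j$, is already defined over~$\f_{q^{d_j}}$, and whenever~$K_n=L_j$ one has~$d_j\mid n$, so passing to~$\f_{q^n}$ is a harmless extension of scalars.
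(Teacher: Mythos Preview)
Your proposal is correct and follows exactly the paper's approach: apply Lemma~\ref{lem:constructing_the_isogeny} for each~$n$ and use that~$\ker\phi$ has only finitely many subgroups to bound the number of resulting isogenies. Your final paragraph on fields of definition is a genuine improvement over the paper's terse ``there are only finitely many possibilities for~$\phi:G''\to G$'', which leaves that point implicit.
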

\begin{proof}
Fix any~$n$ for which~$H_n$ is defined.
Since~$G$,~$G'$ and~$\phi$ are defined over~$\f_q$, in particular they are defined
over~$\f_{q^n}$, so Lemma~\ref{lem:constructing_the_isogeny} applies.

Now let~$n$ vary. Since~$K_{q^n}\subseteq \ker\phi$ and~$\ker\phi$ is finite, there are only
finitely many possibilities for~$\phi: G'' \to G$.
\end{proof}

Lemma~\ref{lem:constructing_the_isogeny} and Corollary~\ref{cor:finitely_many_isogenies}
are key in the proof of the main theorems. We also need the following elementary lemma,
whose proof is left as an exercise:

\begin{lemma} \label{lem:index}
Let~$G$ be a finite group. Let $N$~and~$H$ be two subgroups.
Suppose that~$N$ is normal. Then
\[
	\bigl[G : H\bigr] = \pp{\frac{G}{N} : \frac{HN}{N} } \bigl[N : H\cap N\bigr].
\]
\end{lemma}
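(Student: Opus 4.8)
The plan is to deduce the identity from multiplicativity of the index together with two of the standard isomorphism theorems. First I would observe that, since $N$ is normal in $G$, the set $HN$ is a subgroup of $G$, and it sits in the chain $H \leq HN \leq G$. Multiplicativity of the index along this chain gives
\[
	[G : H] = [G : HN]\,[HN : H].
\]
It remains to identify the two factors on the right with the two factors in the statement.

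For the first factor, I would apply the correspondence theorem to the canonical projection $\pi \colon G \to G/N$. Since $N \subseteq HN$, the map $\pi$ induces a bijection between the left cosets of $HN$ in $G$ and the left cosets of $\pi(HN) = HN/N$ in $G/N$; hence $[G : HN] = \bigl[G/N : HN/N\bigr]$. For the second factor, the second isomorphism theorem yields a group isomorphism $HN/H \cong N/(H\cap N)$ (here $H\cap N$ is normal in $N$, again because $N$ is normal in $G$), and comparing cardinalities gives $[HN : H] = [N : H\cap N]$. Substituting these two identities into the displayed equation gives exactly
\[
	[G : H] = \pp{\frac{G}{N} : \frac{HN}{N}}\,[N : H\cap N].
\]

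I do not expect any genuine obstacle: the only place the hypothesis on $N$ is used is to guarantee that $HN$ is a subgroup (and that $H\cap N \trianglelefteq N$), after which the argument is pure bookkeeping with finite indices. If one prefers to avoid invoking the isomorphism theorems by name, both factor identities can instead be proved directly by exhibiting the relevant coset bijections, but this only lengthens the exposition.
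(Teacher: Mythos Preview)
The paper does not actually prove this lemma (it is ``left as an exercise''), so there is no approach to compare against; your argument is the standard one and the conclusion is correct.

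One small slip worth fixing: you invoke ``a group isomorphism $HN/H \cong N/(H\cap N)$'' and justify it by saying $H\cap N \trianglelefteq N$ because $N\trianglelefteq G$. Neither of these is right as stated. Normality of $N$ in $G$ gives $H\cap N \trianglelefteq H$, not $H\cap N \trianglelefteq N$; and $HN/H$ need not be a group at all, since $H$ is not assumed normal in $HN$. (For a concrete failure take $G=S_4$, $N=A_4$, $H=\langle(123)\rangle$: then $H\cap N=H$ has order $3$ and is not normal in $A_4$.) What the second isomorphism theorem actually says here is $HN/N \cong H/(H\cap N)$. Fortunately this does not damage your proof: the identity $[HN:H]=[N:H\cap N]$ that you need is just the counting formula $|HN|=|H|\,|N|/|H\cap N|$, which requires no normality beyond $HN$ being a subgroup. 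Replace the appeal to a ``group isomorphism'' by this cardinality count (or by the explicit coset bijection $nH\mapsto n(H\cap N)$) and the argument is clean.
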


Following~\cite{malle} and~\cite{rapinchuk} we call~\emph{simple} a semisimple 
algebraic group with no proper positive dimensional normal subgroup.
Notice that other authors prefer the name~\emph{almost-simple}~\cite{RG}.

The main result about simple groups over finite fields is due to J.~Tits:
let $G$ be a simple, simply connected linear algebraic
group defined over~$\f_{q}$. Unless~$G(\f_{q^n})$ is one of
$$ \sl_2(\f_2),\ \sl_2(\f_3),\ \su_3(\f_2),\
	\sp_4(\f_2),\ G_2(\f_2),\ \prescript{2}{}{B_2(\f_2)},\
	\prescript{2}{}{G_2(\f_3)},\ \prescript{2}{}{F_4(\f_2)} $$
the group $G(\f_{q^n})/Z(G(\f_{q^n}))$ is simple~\cite[Theorem 24.17]{malle}.
We refer to this result as Tits'~Theorem in the remainder of the paper.
Notice that last three groups in the list are not groups of rational points:
they are not fixed subgroups of a Frobenius automorphism, but of a Steinberg endomorphism~\cite[Definition 21.3]{malle}.
Therefore, the list of exceptions is actually shorter.
The only thing we will need is that it is finite. 

\begin{theorem} \label{thm:main}
Let~$G$ be reductive linear algebraic groups defined over~$\f_q$.
Let~$k\geq 1$ be such that for infinitely many~$n$ the group of rational points~$G(\f_{q^n})$
contains a subgroup~$H_n$ of index~$k$.
Then there are finitely many linear algebraic groups~$G'$
and isogenies~$G' \to G$ such that, except possibly for finitely many~$n$,
every~$H_n$ is reached by one of them.
\end{theorem}

\begin{proof}
Reductive groups can be obtained from simple groups and tori by taking
products and isogenies. 
The proof of the theorem, which consists of several steps, shows that
the class of algebraic groups satisfying the statement contains simple groups and tori
and it is closed under the formation of reductive groups.

\proofpart{1}{If~$G$ is simple, simply connected, then~$k=1$.}
Let~$G$ be simple, simply connected and suppose~$k>1$. Choose~$n$
such that~$H_n$ is defined. Since~$H_n$ has index~$k$ then its normalizer
has index at most~$k$, therefore~$H_n$ has at most~$k$ conjugates.
Each of them has index~$k$, so their intersection
$$ \bigcap \big\{ H_n^g : \quad g\in G(\f_{q^n}) \big\}$$
is a normal subgroup of index between~$k$ and~$k^k$.
Since~$k>1$ this implies that the subgroup is proper.
For~$n$ large enough Tits' Theorem implies
$$ \frac{\abs{G(\f_{q^n})}}{\abs{Z(G(\f_{q^n}))}} \leq k^k. $$

On the other hand, being connected the group~$G$ is an irreducible variety and so
by Lang-Weil Theorem~\cite[Theorem 1]{lang-weil}
we have~$\abs{G(\f_{q^n})}\sim q^{n\ \dim G}$ for~$n\to \infty$.
By hypothesis~$ZG$ is finite and
$Z(G(\f_{q^n}))=(ZG)(\f_{q^n})$ by~\cite[Proposition 24.13]{malle},
implying
$$ \lim_{n\to\infty} \frac{\abs{G(\f_{q^n})}}{\abs{Z(G(\f_{q^n}))}} = \infty$$
which is a contradiction. We conclude that~$k=1$.
The statement is trivially satisfied by the identity map~$G\to G$.

\proofpart{2}{If~$G$ is semisimple, simply connected, then~$k=1$.}
Let~$G$ be semisimple, simply connected. Then~$G$ is direct product
of its minimal connected normal subgroups which are simple,
simply connected and defined
over~$\f_q$~\cite[17.22, 17.24]{RG}.
We proceed by induction
on the number simple components. Let~$N$ be one simple component.
By Step~1, applied to~$N$, the intersection~$H_n \cap N(\f_{q^n})$
must be equal to~$N(\f_{q^n})$ for~$n$ large enough. By Lemma~\ref{lem:index}
the index~$k=\pp{G(\f_{q^n}):H_n}$ must be equal to the index of~$H_n /N(\f_{q^n})$
in~$G(\f_{q^n})/N(\f_{q^n})$.
Since~$N$ is connected we have~$(G/N)(\f_{q^n}) = G(\f_{q^n})/N(\f_{q^n})$
by~\cite[Corollary 16.5 (ii)]{borel} and induction applies to~$G/N$.

\proofpart{3}{The statement is true if~$G$ is semisimple.}
Let~$G$ be semisimple. Then it admits a
universal covering~$\phi: G' \to G$, where~$G'$
is simply connected and~$\phi$ is an isogeny, both defined over~$\f_q$~\cite[16.21]{RG}.
Note that since~$G$ is semisimple then~$G'$ is semisimple and so Step 2 applies to~$G'$.
The preimages of~$\phi(G(\f_{q^n}))\cap H_n$ with respect to~$\phi$ have index bounded by~$k$ and so
by Step 2 they coincide with~$G'(\f_{q^n})$ for~$n$ large enough.
This means that~$\phi(G'(\f_{q^n})) \subseteq H_n$.
Corollary~\ref{cor:finitely_many_isogenies} applies.

\proofpart{4}{The statement is true if~$G$ is a torus.}
Let~$G$ be a torus. Since~$G(\f_{q^n})$ is isomorphic
to a subgroup of~$(\f^*_{q^m})^d$ for some~$m$~and~$d$, then the integers~$k$~and~$q$
must be coprime.
Consider the homomorphism~$G\to G$ sending~$g\mapsto g^k$,
it is defined over~$\f_q$. Since~$k$~and~$q$
are coprime this morphism is an isogeny. Since~$G(\f_{q^n})/H_n$ has~$k$ elements
we have~$G(\f_{q^n})^k \subseteq H_n$. Corollary~\ref{cor:finitely_many_isogenies} applies.

\proofpart{5}{The statement is true if~$G$ is the direct product of a torus and
a semisimple group, both defined over~$\f_q$.}
Let~$G=T\times S$ where~$T$ is a torus and~$S$ is semisimple.
The integers~$[T(\f_{q^n}) : H_n\cap T(\f_{q^n})]$
and~$[S(\f_{q^n}) : H_n\cap S(\f_{q^n})]$ are divisors of~$[G(\f_{q^n}) : H_n]=k$.
Since the divisors of~$k$ are finite in number, we reduce to finitely
many cases, so by Step 3 and Step 4 we find
finitely many isogenies~$\phi\times\psi$
reaching~$\p{H_n\cap T(\f_{q^n})}\times \p{H_n\cap S(\f_{q^n})}$,
which is a subgroup of~$H_n$. One more time, Corollary~\ref{cor:finitely_many_isogenies} applies.

\proofpart{6}{The statement is true if~$G$ is reductive.}
Let~$G$ be reductive. Let~$T$ be the identity component
of~$ZG$ and~$S=[G,G]$ the derived subgroup.
Since~$\f_q$ is perfect~$T$ and~$S$ are defined over~$\f_q$~\cite[12.1.7]{springer}.
The group~$T$ is a torus, the group~$S$ is semisimple and the product map~$\pi\colon T\times S\to G$
is an isogeny~\cite[Proposition 6.20, Corollary 8.22]{malle}.

By Lemma~\ref{lem:index} the index of~$H_n \cap T(\f_{q^n})S(\f_{q^n})$ in~$T(\f_{q^n})S(\f_{q^n})$
divides~$k$, so there are only finitely many possibilities and hence
finitely many possibilities
for the index of~$\pi^{-1} (H_n)$ in~$T(\f_{q^n})\times S(\f_{q^n})$.
By Step 5 for every~$\pi^{-1} (H_n)$
we find an isogeny~$\phi\colon G' \to T\times S$ defined over~$\f_{q^n}$
reaching~$\phi(G'(\f_{q^n}))=\pi^{-1} (H_n)$
and finitely many isogenies are enough to reach all~$\pi^{-1} (H_n)$.
For every~$\phi$ the composition~$\pi \circ \phi \colon G'\to G$
is an isogeny defined over~$\f_{q^n}$ and
the image~$\pi(\phi(G'(\f_{q^n}))$ is contained in $H_n$.
Lemma~\ref{lem:constructing_the_isogeny} applies to each~$\phi$,
giving finitely many isogenies reaching all~$H_n$.
\end{proof}

Some parts of the proof are interesting in their own right.
From Step~2 we have:

\begin{corollary} \label{cor:semisimple_simply_connected}
Let~$G$ be semisimple, simply connected and~$k>1$.
Then, for~$n$ sufficiently large, the group~$G(\f_{q^n})$ contains no subgroups of index~$k$.
In particular, the minimal indices of proper subgroups of~$G(\f_{q^n})$ diverge to infinity.
\end{corollary}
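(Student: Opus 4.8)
The plan is to observe that this corollary is nothing more than the contrapositive of Step~2 in the proof of Theorem~\ref{thm:main}, so no new argument is required. I would proceed by contradiction: suppose that $G(\f_{q^n})$ contains a subgroup $H_n$ of index~$k$ for infinitely many~$n$. Since $G$ is in particular reductive, this is precisely the hypothesis of Theorem~\ref{thm:main}, and the reasoning carried out in Step~2 of its proof applies verbatim to the semisimple, simply connected group~$G$, yielding $k=1$. This contradicts the assumption $k>1$. Therefore the set of~$n$ for which $G(\f_{q^n})$ admits a subgroup of index~$k$ is finite, which is exactly the assertion of the corollary.

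For the reader's convenience I would briefly recall the mechanism behind Step~2, without reproving it. One reduces from the semisimple case to the simple case by writing $G$ as the direct product of its minimal connected normal subgroups (each simple, simply connected and defined over~$\f_q$) and applying Lemma~\ref{lem:index} together with the identity $(G/N)(\f_{q^n}) = G(\f_{q^n})/N(\f_{q^n})$ valid for a connected normal~$N$. In the simple case, replacing $H_n$ by its normal core $\bigcap_g H_n^g$ produces a \emph{normal} subgroup of index between~$k$ and~$k^k$; by Tits' Theorem this is impossible once~$n$ is large, because $|G(\f_{q^n})|/|Z(G(\f_{q^n}))|$ grows like $q^{n\,\dim G}$ by the Lang--Weil estimate, while $Z(G(\f_{q^n})) = (ZG)(\f_{q^n})$ stays bounded.

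I do not expect a genuine obstacle here, since the substance of the argument has already been established in Step~2. The only point demanding care is the logical direction: Step~2 is phrased as an implication "infinitely many such~$n$ $\Rightarrow$ $k=1$", so the corollary is obtained by contraposition, converting the hypothesis $k>1$ into the conclusion that only finitely many~$n$ can carry a subgroup of index~$k$, i.e. that none does for $n$~sufficiently large.
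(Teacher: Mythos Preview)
Your proposal is correct and matches the paper's approach exactly: the paper simply states that the corollary follows ``From Step~2'' of the proof of Theorem~\ref{thm:main}, which is precisely the contrapositive argument you describe. Your recap of the mechanism behind Step~2 is accurate and adds nothing beyond what the paper already proved there.
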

\begin{proof}
Fix~$h>1$. For every~$k\in \{2,\ldots,h\}$, there is~$n_k$ such that
for every~$n>n_k$ the group~$G(\f_{q^n})$ contains no subgroups of index~$k$.
For every~$n$ larger than~$\max\{n_2,\ldots,n_h\}$ the minimal
index of proper subgroups of~$G(\f_{q^n})$ is larger than~$h$.
\end{proof}

From Step~3 we have:

\begin{corollary}
Let~$G$ be semisimple and let~$d$ be the order of its universal covering~$\phi:G'\to G$.
Let~$H_n \subseteq G(\f_{q^n})$ be an infinite sequence of subgroups of fixed index~$k$.
Then for every~$n$ large enough the group~$H_n$ contains the image of the universal
covering~$\phi(G'(\f_{q^n}))$. In particular~$k\leq d$.
\end{corollary}

By combining Theorem~\ref{thm:from_isogenies_to_subgroups} and Theorem~\ref{thm:main}
we obtain a remarkable fact:

\begin{corollary} \label{cor:arithmetic_progression}
Let~$G$ be reductive and~$k\geq 1$.
The set of~$n$ such that~$G(\f_{q^n})$ contains a subgroup of index~$k$
is either finite, or it contains an arithmetic progression.
\end{corollary}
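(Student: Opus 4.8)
The plan is to combine Theorem~\ref{thm:main} with Lemma~\ref{lem:image} and a short computation with the Frobenius action on the finite kernel of an isogeny. If the set $S$ of those $n$ for which $G(\f_{q^n})$ contains a subgroup $H_n$ of index $k$ is finite, there is nothing to prove, so I would assume $S$ is infinite. By Theorem~\ref{thm:main} there are finitely many isogenies, and outside a finite exceptional set every $H_n$ is reached by one of them; hence I can fix some $n_0\in S$ together with an isogeny $\phi:G'\to G$, defined over $\f_{q^{n_0}}$, with $\phi(G'(\f_{q^{n_0}}))=H_{n_0}$. By Lemma~\ref{lem:image} this forces
\[
	\abs{\ker\phi(\f_{q^{n_0}})} = [G(\f_{q^{n_0}}):H_{n_0}] = k .
\]

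Next I would analyse the automorphism $\tau:=\sigma_{q^{n_0}}$ of the finite group $\ker\phi$; since $\ker\phi$ is finite, $\tau$ has some finite order $d$. For every $j\geq 1$ the isogeny $\phi$ is also defined over $\f_{q^{n_0 j}}$ and $\sigma_{q^{n_0 j}}=\tau^{j}$, so $\ker\phi(\f_{q^{n_0 j}})=\{x\in\ker\phi:\tau^{j}(x)=x\}$. When $\gcd(j,d)=1$ the powers $\tau^{j}$ and $\tau$ generate the same cyclic subgroup of $\mathrm{Aut}(\ker\phi)$ and therefore have the same fixed points, so $\abs{\ker\phi(\f_{q^{n_0 j}})}=\abs{\ker\phi(\f_{q^{n_0}})}=k$. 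Applying Lemma~\ref{lem:image} over the base field $\f_{q^{n_0}}$, the subgroup $\phi(G'(\f_{q^{n_0 j}}))$ has index $k$ in $G(\f_{q^{n_0 j}})$, i.e.\ $n_0 j\in S$. Since every integer of the form $1+md$ with $m\geq 0$ is coprime to $d$, the set $S$ contains the arithmetic progression $\{\,n_0+(n_0 d)\,m : m\geq 0\,\}$, which is exactly the second alternative in the statement.

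The point requiring care is that one cannot invoke Theorem~\ref{thm:from_isogenies_to_subgroups} directly for the isogeny $\phi$ produced above: that theorem yields subgroups of index equal to the order $\abs{\ker\phi}$, whereas here $\phi$ is a priori defined only over $\f_{q^{n_0}}$ and $\abs{\ker\phi}$ may be strictly larger than $k$ — it is only the number of $\f_{q^{n_0}}$-rational kernel points that equals $k$. Restricting the exponents to those $j$ coprime to $d$ and computing by hand the fixed points of $\tau$ on the finite set $\ker\phi$ is precisely the device that resolves this discrepancy; everything else is routine bookkeeping with the lemmas already established.
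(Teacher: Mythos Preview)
Your proof is correct. The paper's own deduction is terser: it invokes Theorem~\ref{thm:from_isogenies_to_subgroups} directly, which is legitimate because the isogenies produced in the proof of Theorem~\ref{thm:main} actually have order exactly~$k$. Indeed, in Lemma~\ref{lem:constructing_the_isogeny} the induced isogeny $G'/K\to G$ has kernel $\ker\phi/K$, and via the isomorphism of Lemma~\ref{lem:iso} one computes $\abs{\ker\phi/K}=[G(\f_{q^n}):H]=k$; this is how every isogeny in the final list of Theorem~\ref{thm:main} is manufactured. Granting that, one simply applies Theorem~\ref{thm:from_isogenies_to_subgroups} with base field $\f_{q^{n_0}}$ and is done.

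Your concern that $\abs{\ker\phi}$ might exceed $k$ is therefore unfounded for the specific isogenies at hand, but since this fact is not part of the \emph{statement} of Theorem~\ref{thm:main} your caution is reasonable. The coprimality argument you give --- that $\sigma_{q^{n_0}}$ and $\sigma_{q^{n_0 j}}$ generate the same cyclic subgroup of $\mathrm{Aut}(\ker\phi)$ whenever $\gcd(j,d)=1$, hence have the same fixed-point set --- is a neat device that makes the deduction independent of how the isogeny was built. It costs a few extra lines but yields a proof that works from the statement of Theorem~\ref{thm:main} alone, without peeking into its proof.
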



\section{From subgroups to isogenies: non-reductive groups.}
Theorem~\ref{thm:main} requires~$G$ to be reductive.
The hypothesis is necessary, due to the abundance of $p$-subgroups in unipotent groups.

For example, consider the additive group~$\additive$ defined over~$\f_p$.
We have~$\additive(\f_{p^n})=\f_{p^n}$, endowed with the field addition.
The subgroups of index~$p$ of~$\f_{p^n}$ are the hyperplanes of~$\f_{p^n}$,
as a vector space over~$\f_p$. There are~$\p{p^n-1}/\p{p-1}$ of them.
In particular, the subgroups of index~$p$ grow in number with~$n$.
They cannot all be reached by finitely many isogenies.

%
%

However, if~$k$ is prime to the characteristic, then reductiveness is not necessary:

\begin{theorem} \label{thm:main_general}
Let~$G$ be a linear algebraic group defined over~$\f_q$.
Let~$k\geq 1$ be prime to~$q$ and
such that for infinitely many~$n$ the group of rational points~$G(\f_{q^n})$
contains a subgroup~$H_n$ of index~$k$.
Then there are finitely many linear algebraic groups~$G'$
and isogenies~$G' \to G$ such that, except for finitely many~$n$,
every~$H_n$ is reached by one of them.
\end{theorem}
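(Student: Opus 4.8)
The plan is to reduce to the reductive case of Theorem~\ref{thm:main} by dividing out the unipotent radical, and then to pull the resulting isogenies back to~$G$ along a fibre product.

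First I would set~$U = R_u(G)$, the unipotent radical of~$G$. Since~$\f_q$ is perfect,~$U$ is a connected normal subgroup defined over~$\f_q$, the quotient~$G/U$ is reductive and defined over~$\f_q$, and~$(G/U)(\f_{q^n}) = G(\f_{q^n})/U(\f_{q^n})$ by~\cite[Corollary 16.5 (ii)]{borel}. The key point is that~$U(\f_{q^n}) \subseteq H_n$ for every~$n$ in our infinite set. Indeed,~$U$ is a connected unipotent group over the perfect field~$\f_{q^n}$, so it has a filtration with successive quotients isomorphic to~$\additive$ and~$\abs{U(\f_{q^n})} = q^{n\,\dim U}$ is a power of the characteristic; on the other hand~$U(\f_{q^n})$ is normal in~$G(\f_{q^n})$, so by Lemma~\ref{lem:index} the index~$[U(\f_{q^n}) : H_n \cap U(\f_{q^n})]$ divides~$[G(\f_{q^n}):H_n] = k$, and as~$k$ is prime to~$q$ this index must be~$1$. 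Therefore~$H_n/U(\f_{q^n})$ is a subgroup of index~$k$ in~$(G/U)(\f_{q^n})$, for the same infinitely many~$n$.

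Applying Theorem~\ref{thm:main} to the reductive group~$G/U$ and the family~$\p{H_n/U(\f_{q^n})}_n$ yields finitely many isogenies~$\psi\colon\bar G'\to G/U$ such that, outside a finite set of~$n$, one of them reaches~$H_n/U(\f_{q^n})$. For each such~$\psi$ I would form the fibre product
$$
  G' \;=\; G\times_{G/U}\bar G' \;=\; \{(g,h)\in G\times\bar G' : \pi(g) = \psi(h)\},
$$
where~$\pi\colon G\to G/U$ is the quotient map; being the preimage of the diagonal of~$G/U$ under~$\pi\times\psi$, it is a closed subgroup of~$G\times\bar G'$ defined over~$\f_{q^n}$ whenever~$\psi$ is. The second projection~$G'\to\bar G'$ is surjective with kernel~$U\times\{1\}$, so~$G'$ is an extension of the connected group~$\bar G'$ by the connected group~$U$ and is therefore connected; the first projection~$\phi\colon G'\to G$ is surjective with finite kernel isomorphic to~$\ker\psi$, hence an isogeny, again defined over~$\f_{q^n}$ whenever~$\psi$ is. Finally, using that~$\pi\colon G(\f_{q^n})\to(G/U)(\f_{q^n})$ is surjective with kernel~$U(\f_{q^n})$, one computes~$\phi\p{G'(\f_{q^n})} = \pi^{-1}\p{\psi\p{\bar G'(\f_{q^n})}}$, which, when~$\psi$ reaches~$H_n/U(\f_{q^n})$, equals~$\pi^{-1}\p{H_n/U(\f_{q^n})} = H_n$ since~$U(\f_{q^n})\subseteq H_n$. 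Hence the finitely many isogenies arising from the~$\psi$'s reach all but finitely many of the~$H_n$.

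I expect the heart of the matter to be the containment~$U(\f_{q^n})\subseteq H_n$ established in the first step: this is exactly where the hypothesis~$\gcd(k,q)=1$ is used, and the~$\additive$ example preceding the theorem shows the conclusion genuinely fails without it. Everything afterwards is formal once Theorem~\ref{thm:main} is available; the only routine verifications are that the fibre product~$G'$ is a connected linear algebraic group and that forming~$\f_{q^n}$-points interacts well with the quotient by the connected normal subgroup~$U$, both handled by the same standard facts already invoked in the reductive case.
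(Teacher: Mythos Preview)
Your proof is correct and follows essentially the same route as the paper: reduce to the reductive quotient~$G/U$ using that~$U(\f_{q^n})\subseteq H_n$ when~$\gcd(k,q)=1$, apply Theorem~\ref{thm:main}, and lift the resulting isogenies back to~$G$ via the fibre product over~$G/U$. Your direct argument that the fibre product is connected (as an extension of~$\bar G'$ by~$U$) is a mild improvement over the paper, which instead hedges by passing to the identity component if necessary.
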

\begin{proof}
Let~$U$ be the unipotent radical of~$G$.
Since~$\f_q$ is perfect then~$U$ is defined over~$\f_q$~\cite[12.1.7]{springer}.
Since~$U(\f_{q^n})$ is a~$p$-group
then the index of~$U(\f_{q^n})\cap H_n$ in~$U(\f_{q^n})$
divides both~$q$~and~$k$, so it must be~$1$ and hence~$U(\f_{q^n})\subseteq H_n$.
Theorem~\ref{thm:main} applied
to the quotient~$G/U$ gives finitely many isogenies~$\phi:G'\to G/U$
reaching~$H_n/U(\f_{q^n})$ for~$n$ large.

Fix~$n$ and one isogeny~$\phi:G'\to G/U$ reaching~$H_n/U(\f_{q^n})$.
We want to lift~$\phi$ to~$G$.
Consider the fiber product~$G'\times_{G/U} G$
whose elements are the pairs $(g',g)\in G'\times G$
satisfying~$\phi(g')=\pi(g)$, where~$\pi:G\to G/U$ is the canonical projection.
We have the following commutative diagram
$$
\begin{tikzcd}
G'\times_{G/U} G \arrow[r] \arrow[d]
	& G \arrow[d, "\pi"] \\
G' \arrow[r, "\phi"]	& G/U
\end{tikzcd}
$$
where the unlabelled arrows are the canonical projections.

For now, suppose that~$G'\times_{G/U} G$ is connected.
The top arrow~$\phi': G'\times_{G/U} G\to G$ is an isogeny since its kernel
is~$\ker(\phi)\times 1$. Since~$\phi$,~$\pi$ and~$U$ are defined over~$\f_q$
and~$U$ is connected we have
$$(G'\times_{G/U} G) (\f_{q^n}) = G'(\f_{q^n}) \times_{G(\f_{q^n})/U(\f_{q^n})} G(\f_{q^n})$$
and by commutativity of the diagram
$$ \pi(\phi'((G'\times_{G/U} G) (\f_{q^n}))) = \phi(G'(\f_{q^n})). $$
The term on the right equals~$H_n/U(\f_{q^n})$. We conclude that~$\phi'((G'\times_{G/U} G) (\f_{q^n})))=H_n$.
Finally,
if~$G'\times_{G/U} G$ is not connected replace it by its identity component:
since~$G$~and~$G'$ are connected the projections remain surjective and the
same argument applies.
\end{proof}


\section{Varying the characteristic.}
In this section~$G$ is a linear algebraic group defined over~$\q$.
Except for finitely may primes the group~$G$ is well-defined module~$p$
and we can consider~$G(\f_p)$.
In analogy with the previous result, we ask whether the subgroups
in the sequence~$(G(\f_p))_p$ are related to isogenies~$\phi:G'\to G$
defined over~$\q$.

Corollary~\ref{cor:semisimple_simply_connected} about simply connected groups
has a perfect analogue:

\begin{theorem} \label{thm:characteristic}
Let~$G$ be a semisimple, simply connected linear algebraic group defined over~$\q$.
Let~$k>1$. Then for~$p$ sufficiently large the group~$G(\f_p)$ contains
no subgroup of index~$k$.
\end{theorem}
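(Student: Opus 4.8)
The plan is to run the argument of Corollary~\ref{cor:semisimple_simply_connected} — that is, Steps~1 and~2 in the proof of Theorem~\ref{thm:main} — with the family $\p{G(\f_{q^n})}_n$ replaced by $\p{G(\f_p)}_p$. The two inputs that drove that argument, Tits' Theorem and the Lang--Weil estimate, both admit versions that are uniform as $p$ varies, and securing this uniformity is the real content; once it is in place, the group-theoretic endgame is identical.

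First I would set up the reduction. Fix a number field $L$ over which the absolutely simple factors of $G$ are all defined, spread $G$ out to a smooth affine group scheme over $\z[1/N]$ for a suitable $N$, and discard the finitely many primes dividing $N$, ramified in $L$, or dividing the order of the (finite) center scheme of $G$. For the remaining $p$ the reduction $G_{\f_p}$ is semisimple and simply connected over $\f_p$, so by the almost-simple decomposition over the perfect field $\f_p$~\cite[17.22, 17.24]{RG} it is a \emph{direct} product of its $\f_p$-almost-simple factors, each of the form $\mathrm{Res}_{\f_{p^{d_j}}/\f_p}(H_j)$ with $H_j$ absolutely simple and simply connected over $\f_{p^{d_j}}$. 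The number of factors and the degrees $d_j$ are bounded independently of $p$ (by the number of absolutely simple factors of $G$ and by $[L:\q]$), and the $H_j$ fall into finitely many geometric isomorphism types. Hence $G(\f_p)=\prod_j H_j(\f_{p^{d_j}})$, the center $Z(G(\f_p))$ is the group of $\f_p$-points of the center scheme~\cite[Proposition 24.13]{malle} and so has bounded order, and Lang--Weil~\cite[Theorem 1]{lang-weil} — whose error term depends only on the fixed dimensions and degrees — gives $\abs{H_j(\f_{p^{d_j}})}\geq\abs{H_j(\f_p)}\sim p^{\dim H_j}\to\infty$ uniformly in $j$.

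Next, for all large $p$, Tits' Theorem~\cite[Theorem 24.17]{malle} applies to each factor, the finitely many exceptional groups having been excluded: each $H_j(\f_{p^{d_j}})$ is a perfect central extension of a nonabelian finite simple group $S_j$, and since $\dim H_j\geq 3$ and the central parts are bounded, $\abs{S_j}\to\infty$ uniformly. So $G(\f_p)$ is perfect, $Z:=Z(G(\f_p))$ is bounded, and $G(\f_p)/Z\cong\prod_j S_j$ with $\abs{S_j}>k^k$ for every $j$ once $p$ is large enough. Suppose then that $H\leq G(\f_p)$ has index $k>1$. Its normalizer has index at most $k$, so $H$ has at most $k$ conjugates and the normal core $N=\bigcap_g H^g$ is a proper normal subgroup of index at most $k^k$. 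Since $\prod_j S_j$ is a product of nonabelian simple groups, $NZ/Z$ is a sub-product of the $S_j$: if it is all of $\prod_j S_j$ then $G(\f_p)=NZ$, so $G(\f_p)/N$ is a quotient of the abelian group $Z/(Z\cap N)$, contradicting perfectness since $N$ is proper; otherwise $NZ/Z$ omits some factor, whence $[G(\f_p):N]\geq\min_j\abs{S_j}>k^k$, contradicting $[G(\f_p):N]\leq k^k$. In either case no such $H$ exists, which is the claim.

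The step I expect to be the main obstacle is the second paragraph: converting ``$G$ is defined over $\q$'' into the assertions, uniform in $p$, that $G_{\f_p}$ is semisimple and simply connected with an almost-simple decomposition of bounded complexity, that $Z(G(\f_p))$ stays bounded, and that $\abs{G(\f_p)}$ grows like $p^{\dim G}$ with implied constant independent of $p$. Everything downstream is the finite-field argument of Step~2 almost verbatim; what is genuinely new is the spreading-out bookkeeping and the appeal to a uniform form of Lang--Weil.
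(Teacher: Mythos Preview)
Your proof is correct but takes a different route from the paper's. The paper also reduces to the simple simply connected case and, as in Step~1 of Theorem~\ref{thm:main}, produces a proper normal subgroup of index at most~$k^k$, hence $\abs{G(\f_p)}/\abs{Z(G(\f_p))}\leq k^k$. The divergence is in how the contradiction is reached: rather than invoking a uniform Lang--Weil estimate (and the spreading-out bookkeeping you rightly flag as the main obstacle), the paper appeals to the $BN$-pair order formula $\abs{G(\f_p)} = p^{\abs{\Phi^+}}\,\abs{T(\f_p)}\sum_{w\in W(\f_p)} p^{l(w)}$~\cite[Proposition 24.3]{malle}, which together with $ZG\subseteq T$ gives $\abs{G(\f_p)}/\abs{Z(G(\f_p))}\geq p$ outright. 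This sidesteps all uniformity issues, since the formula holds identically over every finite ground field and nothing needs to be tracked across primes. Your approach has the virtue of paralleling the fixed-characteristic argument exactly and of making the reduction modulo~$p$ honest (the Weil-restriction decomposition you spell out is glossed over in the paper's reduction to the ``simple'' case); the paper's approach is shorter and avoids the scheme-theoretic overhead entirely.
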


\begin{proof}
Suppose that~$G$ is simple and simply connected, the general case follows by induction
on the number of simple factors. The proof is by contradiction: Suppose
that for infinitely many~$p$ the group~$G(\f_p)$ contains a subgroup of index~$k>1$.
As in the proof of Theorem~\ref{thm:main} for~$p$ large enough we have
$$ \frac{\abs{G(\f_{p})}}{\abs{Z(G(\f_{p}))}} \leq k^k. $$
In the proof of Theorem~\ref{thm:main}
Lang-Weil Theorem led from here to a contradiction.
Here we reach a contradiction by applying the theory of $BN$-pairs~\cite[Chapter 11]{malle}.
Let~$T$ be a maximal torus of~$G$ defined over~$\f_p$. Let~$W$ be its Weyl group
and~$\Phi^+$ be the set of positive roots. We have
\[
	\abs{G(\f_p)} = 
		p^{\abs{\Phi^+}}	\abs{T(\f_p)}	\sum_{w\in W(\f_p)} p^{l(w)}
\]
where~$l(w)$ are positive integers independent of~$p$~\cite[Proposition 24.3]{malle}.
Since~$ZG \subseteq T$ and~$Z(G(\f_p))=(ZG)(\f_p)$~\cite[Proposition 24.13]{malle} we have
$$ \frac{\abs{G(\f_p)}}{\abs{Z(G(\f_{p}))}}
	\geq p^{\abs{\Phi^+}}  \frac{\abs{T(\f_p)}}{\abs{Z(G(\f_{p}))}}	\sum_{w\in W(\f_p)} p^{l(w)}
	\geq p. $$
We obtain~$p\leq k^k$ for infinitely many primes, a contradiction.
\end{proof}

We now show, by an explicit example, that infinitely many subgroups
of fixed index may be unreachable by a rational isogeny.
Consider the torus
\[ G = \left\{
\begin{pmatrix}
a & -b \\
b & a-b
\end{pmatrix} \mid a^2-ab+b^2\neq 0
\right\}.
\]
Notice that~$G(\f)$ is split if and only if the field~$\f$
contains a primitive third root of unity. Indeed, let~$\xi^3=1$ and~$\xi\neq 1$.
We have~$a^2-ab+b^2 = (a + \xi b)(a + \xi^2 b)$ and
\[
\begin{pmatrix}
1 & \xi \\
1 & \xi^2
\end{pmatrix}
\begin{pmatrix}
a & -b \\
b & a-b
\end{pmatrix}
\begin{pmatrix}
1 & \xi \\
1 & \xi^2
\end{pmatrix}^{-1} =
\begin{pmatrix}
a + \xi b & 0 \\
0 & a + \xi^2 b
\end{pmatrix}.
\]

As variety over~$\c$ the group~$G(\c)$ is equal to the projective plane
minus three lines: the line at infinity, which is defined over~$\q$,
and two conjugated lines~$a + \xi b$~and~$a + \xi^2 b$.
This gives three $2$-fold coverings of which only one can be defined over~$\q$.
The rational covering actually corresponds to a rational isogeny, namely
\[
\begin{pmatrix}
a & -b & 0\\
b & a-b & 0 \\
0 & 0 & c
\end{pmatrix} \mapsto
\begin{pmatrix}
a & -b \\
b & a-b
\end{pmatrix}
\]
where~$c^2=a^2-ab+b^2$. Matrices as the one on the left form a two-dimensional torus.

On the other hand,
for every prime~$p$ satisfying~$p\equiv 1\ \mod \ 3$
the field~$\f_p$ contains a primitive third root of unity. Therefore,
the group~$G(\f_p)$ is
isomorphic to~${\f_p}^*\times{\f_p}^*$.
In particular, it has three subgroups of index~$2$.
We deduce that at least two subgroups cannot be reached by rational isogenies.

However, this suggests that infinitely many subgroups of fixed index
in~$(G(\f_p))_p$ correspond to isogenies over a finite
extension of~$\q$. We leave this fact as a conjecture.

\bibliographystyle{siam}
\bibliography{refs}

\end{document}